\documentclass[12pt]{amsart}

\usepackage{amsthm,amsfonts}
\usepackage{amssymb,graphicx,color}

\newtheorem{theorem}{Theorem}[section]

\newtheorem{remark}[theorem]{Remark}

\newcommand{\R}{\mathbb R}

\begin{document}

\title[Lipschitz knot theory]{Surface singularities in $\R^4$: first steps towards Lipschitz knot theory}

\author{Lev Birbrair*}\thanks{*Research supported under CNPq 302655/2014-0 grant and by Capes-Cofecub}
\address{Dept Matem\'atica, Universidade Federal do Cear\'a
(UFC), Campus do Picici, Bloco 914, Cep. 60455-760. Fortaleza-Ce,
Brasil} \email{birb@ufc.br}

\author{Andrei Gabrielov**}\thanks{**Research supported by the NSF grant DMS-1665115}
\address{Dept Mathematics, Purdue University, 150 N. University Street, West Lafayette, IN 47907-2067, USA } \email{gabriea@purdue.edu}

\begin{abstract}
A link of an isolated singularity of a two-dimensional semialgebraic surface in $\R^4$ is a knot (or a link) in $S^3$. Thus the ambient Lipschitz classification of surface singularities in $\R^4$ can be interpreted as
a metric refinement of the topological classification of knots (or links) in $S^3$.
We show that, given a knot $K$ in $S^3$, there are infinitely many distinct ambient Lipschitz equivalence classes of outer Lipschitz equivalent singularities in $\R^4$ with the links topologically ambient equivalent to $K$.
\end{abstract}

\maketitle

\section{Introduction}

here are three kinds of equivalence relations in Lipschitz Geometry of Singularities. One equivalence relation, inner Lipschitz equivalence, is bi-Lipschitz homeomorphism (of the germs at the origin) of singular sets with respect to the inner metric, where the distance between two points of a set $X$ is defined as infimum of the lengths of paths inside $X$ connecting the two points. The second equivalence relation, outer Lipschitz equivalence, is bi-Lipschitz homeomorphism with respect to the outer metric, where the distance is defined as the distance between the points in the ambient space.
A set $X$ is called normally embedded if its inner and outer metrics are equivalent.

 In \cite{BirbrairGabrielov2019}, we considered the third equivalence relation, ambient Lipschitz equivalence. Two germs $X$ and $Y$ of semialgebraic sets at the origin of $\R^n$ are called ambient Lipschitz equivalent if there exists a germ of a bi-Lipschitz homeomorphism $h$ of $(\R^n,0)$ such that $Y=h(X)$. In particular, such sets $X$ and $Y$ are outer Lipschitz equivalent. Two outer Lipschitz equivalent sets are always inner Lipschitz equivalent, but can be ambient topologically non-equivalent (see Neumann-Pichon \cite{NeumannPichon2017}).

Let $X$ and $Y$ be two semialgebraic surface singularities (two-dimensional germs at the origin) in $\R^n$ which are outer Lipschitz equivalent. Suppose also that $X$ and $Y$ are topologically ambient equivalent. Does it imply that the sets $X$ and $Y$ are ambient Lipschitz equivalent? It seems plausible that the answer is ``yes'' when $n\ge 5$, or when $X$ and $Y$ are normally embedded. However, examples in \cite{BirbrairGabrielov2019} show that the answer may be ``no'' when $n=3$ or $n=4$.

One class of examples in $\R^3$ and $\R^4$ is based on the theorem of Sampaio \cite{Sampaio2016}: ambient Lipschitz equivalence of two sets implies ambient Lipschitz equivalence of their tangent cones. Thus any two sets with topologically ambient non-equivalent tangent cones cannot be ambient Lipschitz equivalent.

The case $n=4$ is especially interesting, as in that case the link of a two-dimensional germ
 $X$ in $\R^4$ is a knot (or a link) in $S^3$, and the arguments are based on the knot theory.
 For a given surface $X\subset \R^3$ there are finitely many distinct ambient Lipschitz equivalence classes of the
 surfaces which are topologically ambient equivalent and outer Lipschitz equivalent to $X$. However, there may be infinitely many such ambient Lipschitz equivalence classes for a surface in $\R^4$. Moreover, a more delicate argument, based on the ``bridge construction'' below, provides infinitely many distinct Lipschitz ambient equivalence classes of surfaces which are topologically ambient equivalent to a given surface $X\subset\R^4$ and all belong to the same outer Lipschitz equivalence class, even when each of these surfaces has a tangent cone consisting of a single ray.

 In this paper we use the ``topological ambient equivalence'' relation for the germs at the origin of singular sets, and for their links.
 It means that there exists a homeomorphism of a small ball (or a small sphere) centered at the origin, mapping one singular set (or its link) to another.
 This definition corresponds to the classical topological equivalence in Singularity Theory.
 For the links of surfaces in $\R^4$ this equivalence is closely related to isotopy of knots. There is, however, a minor difference between the topological ambient equivalence and isotopy: in Knot Theory the homeomorphism is required to be isotopic to identity.
 Two knots may be topologically ambient equivalent but not isotopic.
 This difference between two equivalence relations is non-essential, as the number of ambient Lipschitz equivalence classes is infinite in both cases.

The authors thank the anonymous referee whose thoughtful suggestions helped us to improve the exposition.
\begin{figure}
 \centering
 \includegraphics[width=4in]{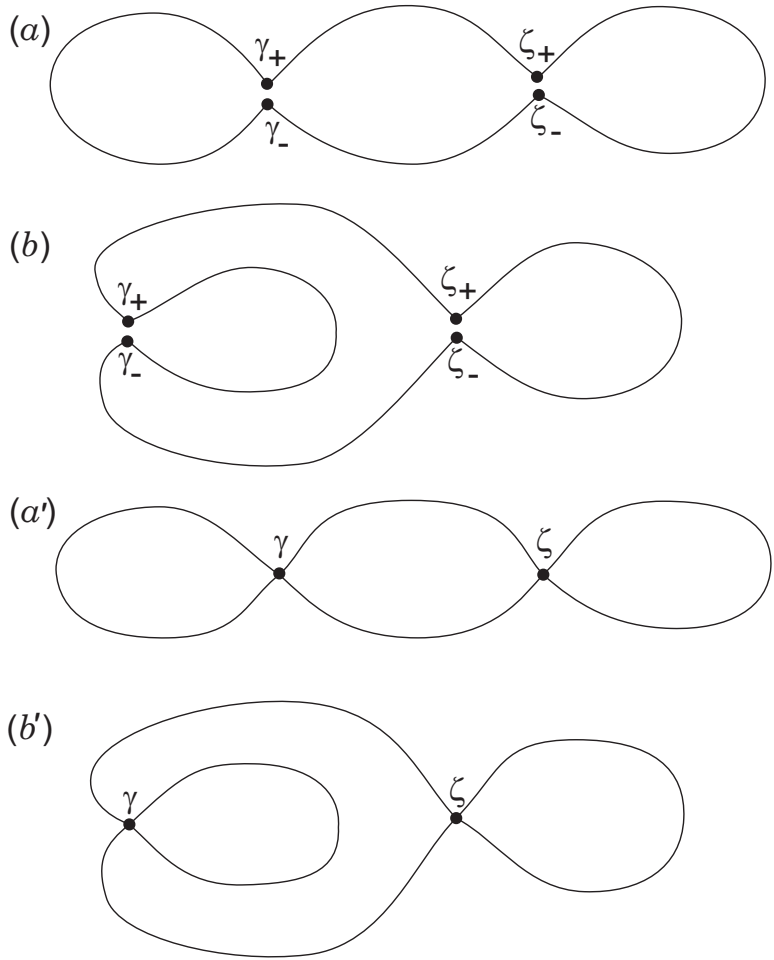}
 \caption{The links of the surfaces $X_1$ and $X_2$ in Example 1,
 and of their tangent cones.}\label{fig:example2b}
 \bigskip
 \end{figure}
 \begin{figure}
 \centering
 \includegraphics[width=4in]{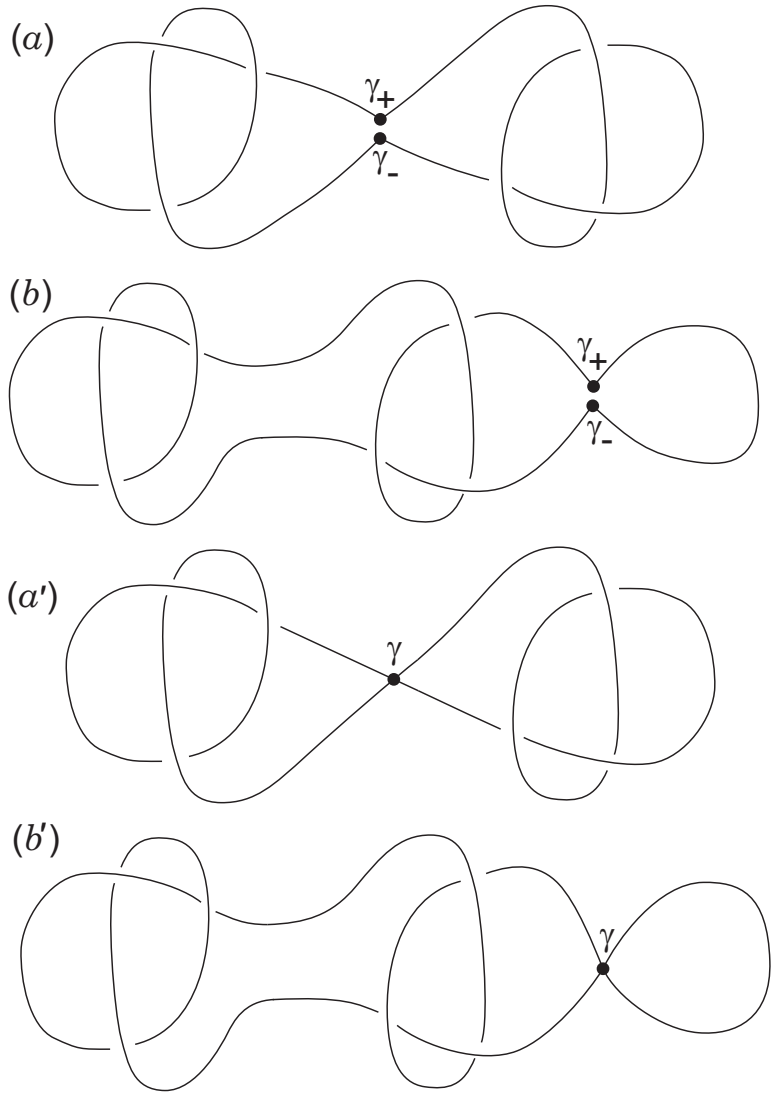}
 \caption{The links of the surfaces $X_1$ and $X_2$ in Example 2,
 and of their tangent cones.}\label{fig:knot1bt}
 \bigskip
 \end{figure}

\section{Examples in $\R^3$ and $\R^4$ based on Sampaio's theorem}

In this paper, an {\em arc} in $\R^n$ is a germ at the origin of a semialgebraic mapping $\gamma:[0,\epsilon)\to\R_n$ such that $\|\gamma(t)\|=t$.
The {\em tangency order} $\text{tord}(\gamma_1,\gamma_2)$ of two arcs is the smallest Puiseux exponent at zero of the function $\|\gamma_2(t)-\gamma_1(t)\|$.
Sometimes in the literature the tangency order is called the tangency exponent or the order of contact (see \cite{BFGG}, \cite{BM}, \cite{Parusinski1988}).

\noindent{\bf Example 1} (see \cite{BirbrairGabrielov2019}, Example 2). Let $X_1$ and $X_2$ be two surfaces in $\R^3$ with the links at the origin shown in Fig.~\ref{fig:example2b}$a$ and Fig.~\ref{fig:example2b}$b$. There are two pairs of ``pinched'' arcs, $\gamma_\pm$ and $\zeta_\pm$,
with $\text{tord}(\gamma_+,\gamma_-)>1$ and $\text{tord}(\zeta_+,\zeta_-)>1$, while both surfaces are straight-line cones over their links outside small conical neighborhoods of $\gamma_\pm$ and $\zeta_\pm$. The arcs $\gamma_+$ and $\gamma_-$ correspond to a single ray $\gamma$ of the tangent cone, and the arcs $\zeta_+$ and $\zeta_-$ correspond to a single ray $\zeta$. One can define $X_1$ and $X_2$ by explicit semialgebraic formulas. For small conical neighborhoods of the pinched arcs $\gamma_\pm$ and $\zeta_\pm$ it can be done as in \cite{BirbrairGabrielov2019}, Example 3, and outside those neighborhoods the links of $X_1$ and $X_2$ can be approximated by piecewise linear curves.
Both surfaces $X_1$ and $X_2$ are topologically ambient equivalent to a cone over a circle in $S^2$ and outer Lipschitz equivalent, but not ambient Lipschitz equivalent by Sampaio's theorem, since their tangent cones are not topologically ambient equivalent: there is a connected component of the complement in $S^2$ of the link of $X_1$ (see Fig.~\ref{fig:example2b}$a'$) with the whole link as its boundary, while there is no such component of the complement of the link of $X_2$ (see Fig.~\ref{fig:example2b}$b'$).
\bigskip

\noindent{\bf Example 2} (see \cite{BirbrairGabrielov2019}). Let $X_1$ and $X_2$ be two surfaces in $\R^4$ with the links at the origin shown in Fig.~\ref{fig:knot1bt}$a$ and Fig.~\ref{fig:knot1bt}$b$, and the links of their tangent cones at the origin
shown in  and Fig.~\ref{fig:knot1bt}$b'$. The tangency exponent of the arcs
$\gamma_+$ and $\gamma_-$ is $\alpha>1$, thus the arcs $\gamma_+$ and $\gamma_-$ correspond to a single ray $\gamma$ of the tangent cone. One can define $X_1$ and $X_2$ by explicit semialgebraic formulas.
Both surfaces $X_1$ and $X_2$ are topologically ambient equivalent to a cone over a circle in $S^3$ embedded as the connected sum of two trefoil knots, and outer Lipschitz equivalent but not ambient Lipschitz equivalent by Sampaio's theorem, since their tangent cones at the origin are not topologically ambient equivalent: the link of the tangent cone of $X_1$ (see Fig.~\ref{fig:knot1bt}$a'$) is a bouquet of two knotted circles, while the link of $X_2$ (see Fig.~\ref{fig:knot1bt}$b'$) is a bouquet of a knotted circle and an unknotted one.

\section{Bridge construction}

A $(q,\beta)$-\emph{bridge} is the set $A_{q,\beta}=T_+\cup T_-\subset {\mathbb R}^4$ where $1<\beta<q$ and
$$T_\pm=\left\{ 0\le t\le 1,\; -t^\beta\le x\le t^\beta,\; y=\pm t^q,\; z=0\right\}.$$
Its link is shown in Fig.~\ref{fig:bridge} (left).
A \emph{broken $(q,\beta)$-bridge} $B_{q,\beta}$ is obtained from $A_{q,\beta}$ by the \emph{saddle operation}, removing from $T_\pm$ two $p$-H\"older triangles
$$\left\{t\ge 0,\; |x|\le t^p,\; y=\pm t^q,\; z=0\right\}$$
where $p>q$, and replacing them by two $q$-H\"older triangles
$$\left\{ 0\le t\le 1,\; x=\pm t^p,\; |y|\le t^q,\; z=0\right\}.$$
Its link is shown in Fig.~\ref{fig:bridge} (right).
We call $(q,\beta)$-bridge any surface outer Lipschitz equivalent to $A_{q,\beta}$.
It was shown in \cite{BirbrairGabrielov2019} that ambient Lipschitz equivalence $h:X\to Y$ of two surfaces in $\R^4$ maps a $(q,\beta)$-bridge in $X$ to a $(q,beta)$-bridge in $Y$, and that the two surfaces remain ambient Lipschitz equivalent when their $(q,\beta)$-bridges are replaced by the broken $(q,\beta)$-bridges.

\begin{remark}\label{broken} Our definition of a broken bridge is slightly different from the definition in Example 4 of \cite{BirbrairGabrielov2019}, where it was defined with $p<q$.
Condition $p>q$ makes the ``broken bridge'' operation invertible:
two surface germs with the same $(q,\beta)$-bridge are ambient Lipschitz equivalent if and only if they are ambient Lipschitz equivalent after the bridge is broken (with the same $p>q$). Note that this invertibility is never used here or in \cite{BirbrairGabrielov2019}.
\end{remark}

\begin{figure}
 \centering
 \includegraphics[width=4.5in]{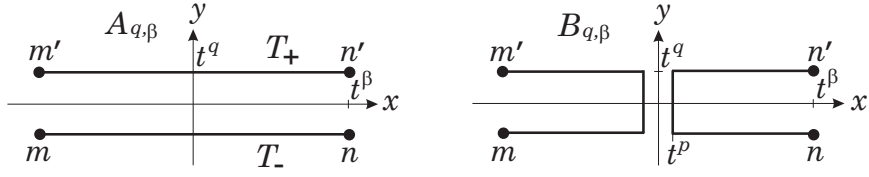}
 \caption{The links of a $(q,\beta)$-bridge $A_{q,\beta}$ and a broken $(q,\beta)$-bridge $B_{q,\beta}$.}\label{fig:bridge}
 \end{figure}

\begin{figure}
 \centering
 \includegraphics[width=4.5in]{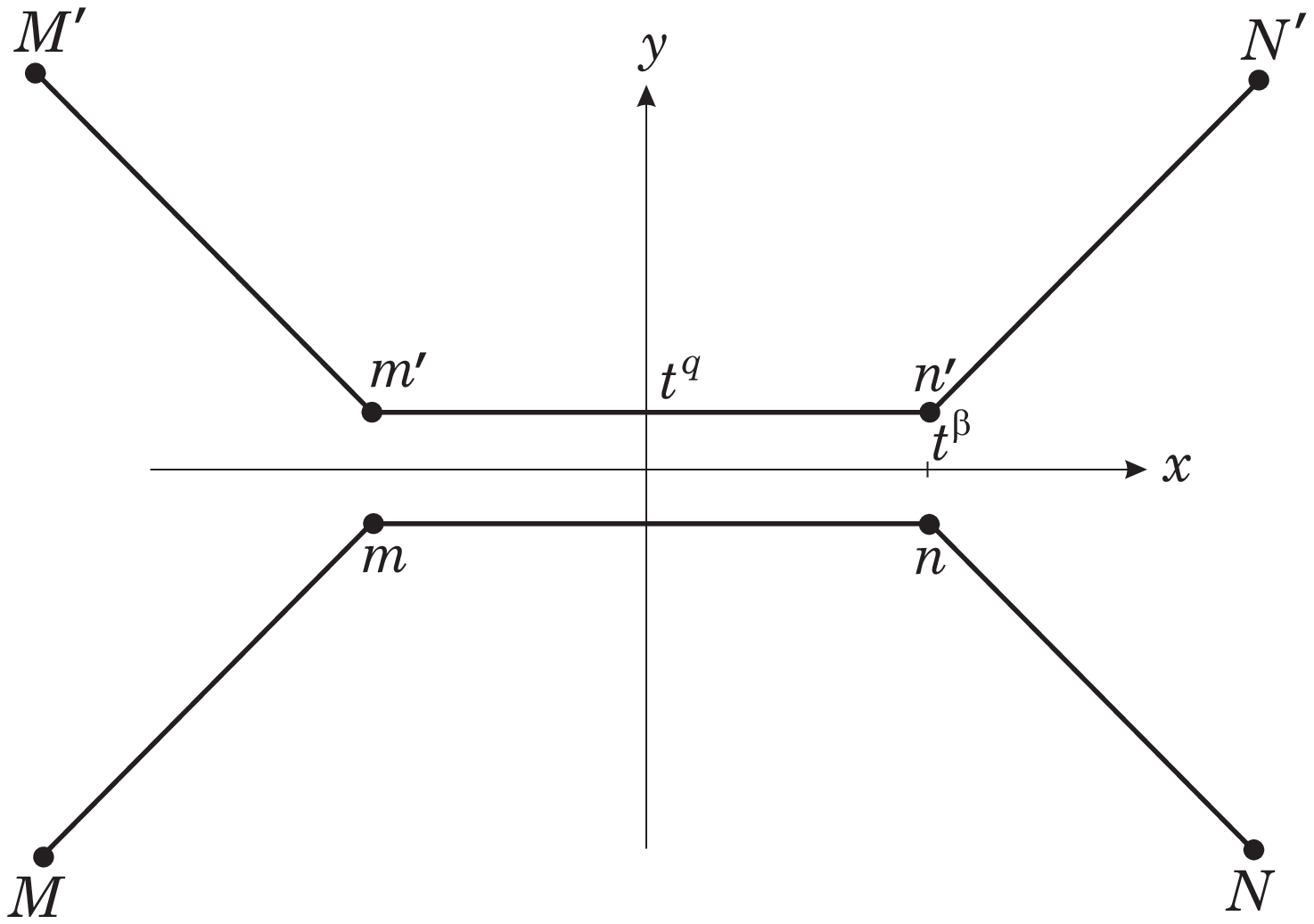}
 \caption{The link of the surface $G$ in Example 3.}\label{fig:G}
 \end{figure}

\begin{figure}
 \centering
 \includegraphics[width=4.5in]{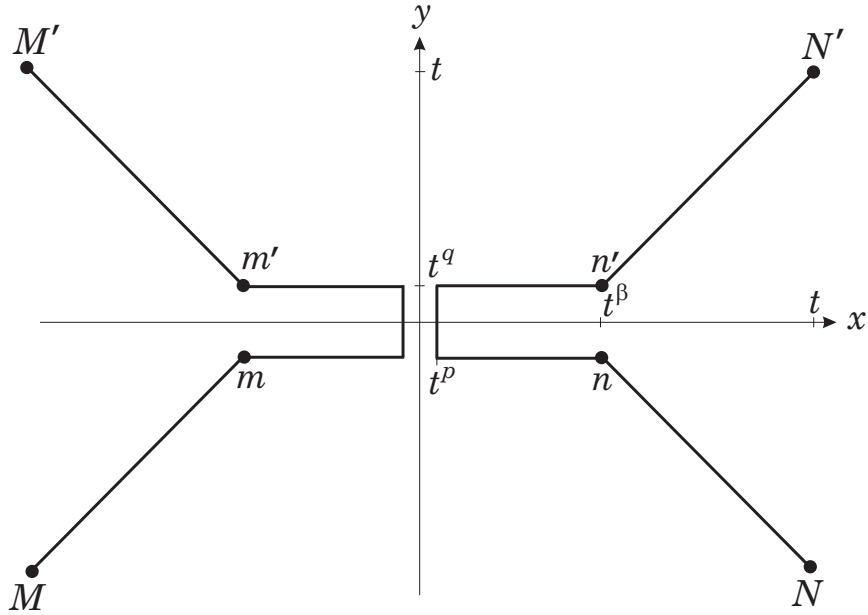}
 \caption{The link of the surface $H$ in Example 3.}\label{fig:H}
 \end{figure}

\noindent{\bf Example 3} (see \cite{BirbrairGabrielov2019}).
The common boundary of $A_{q,\beta}$ and $B_{q,\beta}$ consists of the four arcs $\left\{0\le t\le 1,\; x=\pm t^\beta,\; y=\pm t^q,\; z=0\right\}$ shown as $m,\,n,\,m',\,n'$ in Fig.~\ref{fig:bridge}.
Let $G \subset \R^4$ be a semialgebraic surface containing $A_{q,\beta}$ and bounded by the four straight line segments $\{0\le t\le 1,\; \pm x =\pm y = t,\;z=0\}$ (see Fig.~\ref{fig:G} where the boundary arcs of $G$ are shown as $M,\,N,\,M',\,N'$). Let $H$ be the surface obtained from $G$ by replacing the bridge $A_{\beta,q}$ by the broken bridge $B_{\beta,q}$ (see Fig.~\ref{fig:H}).

Consider two topologically trivial knots $K$ and $L$ in the hyperplane $\{t=1\}\subset \R^4_{x,y,z,t}$ as shown in Fig.~\ref{fig:knot2}a and Fig.~\ref{fig:knot2}b.
Each of these two knots contains the curve $g=G\cap\{t=1\}$,
We define the surface $X$ as the union of $G$ and a straight cone over $K\setminus g$, and the surface $Y$ as the union of $G$ and a straight cone over $L\setminus g$.

\begin{figure}
 \centering
 \includegraphics[width=4.5in]{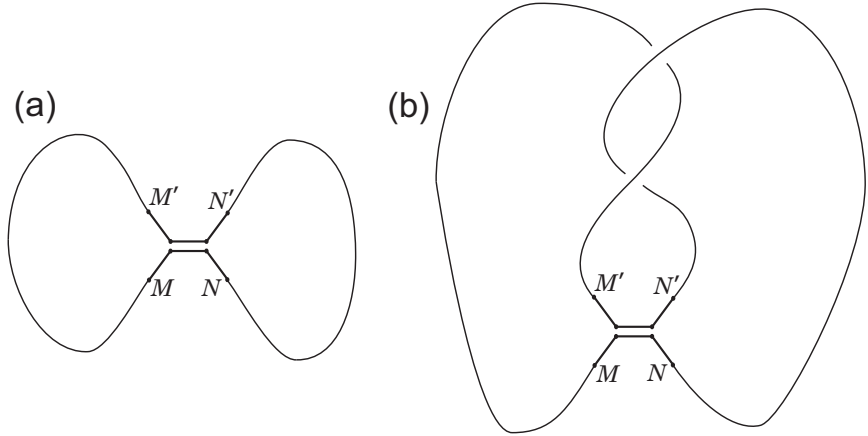}
 \caption{The links of the surfaces $X$ and $Y$ in Example 3.}\label{fig:knot2}
 \end{figure}

\begin{figure}
 \centering
 \includegraphics[width=4.5in]{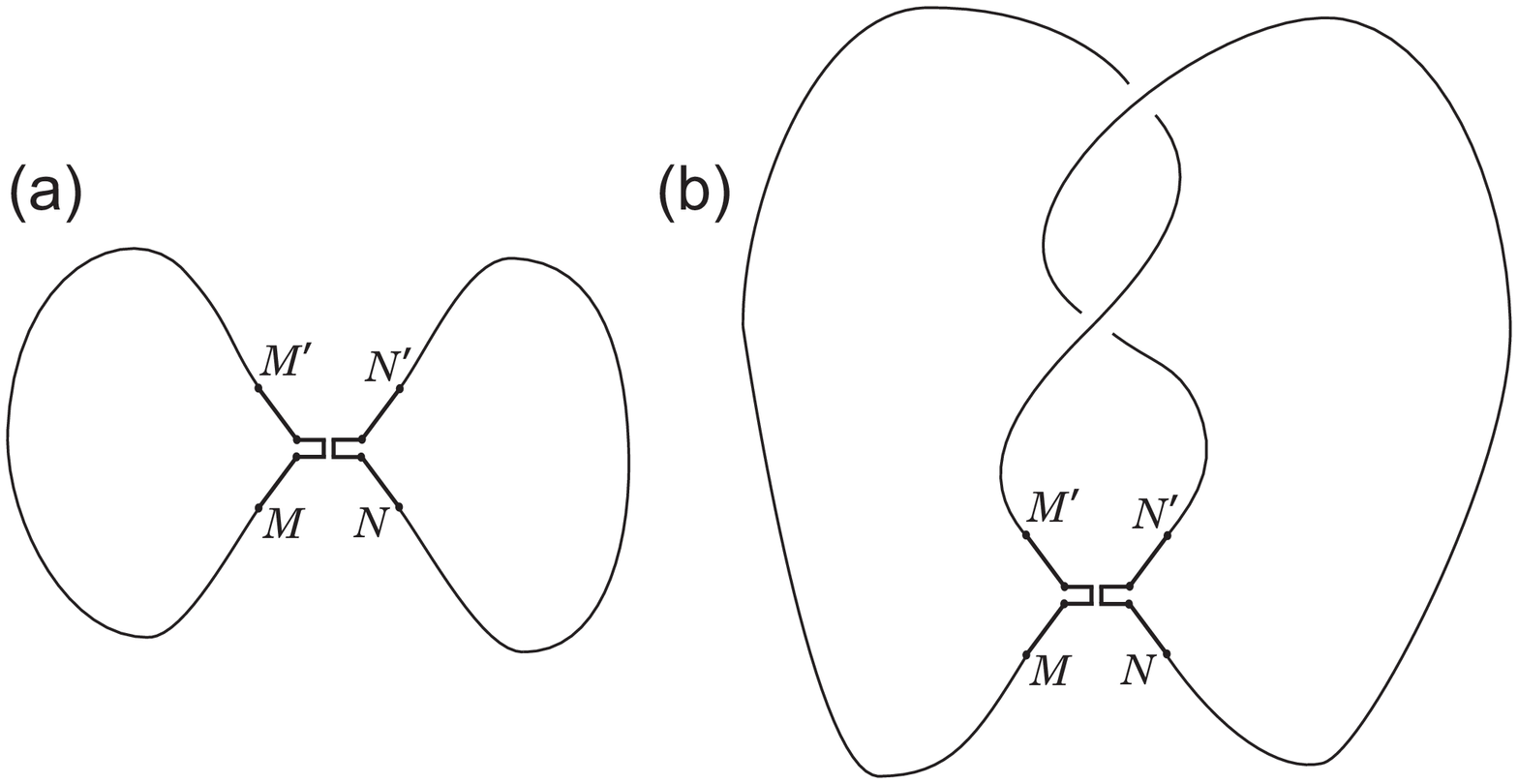}
 \caption{The links of the surfaces $X'$ and $Y'$ in Example 3.}\label{fig:knot2b}
 \end{figure}

\begin{figure}
 \centering
 \includegraphics[width=4.5in]{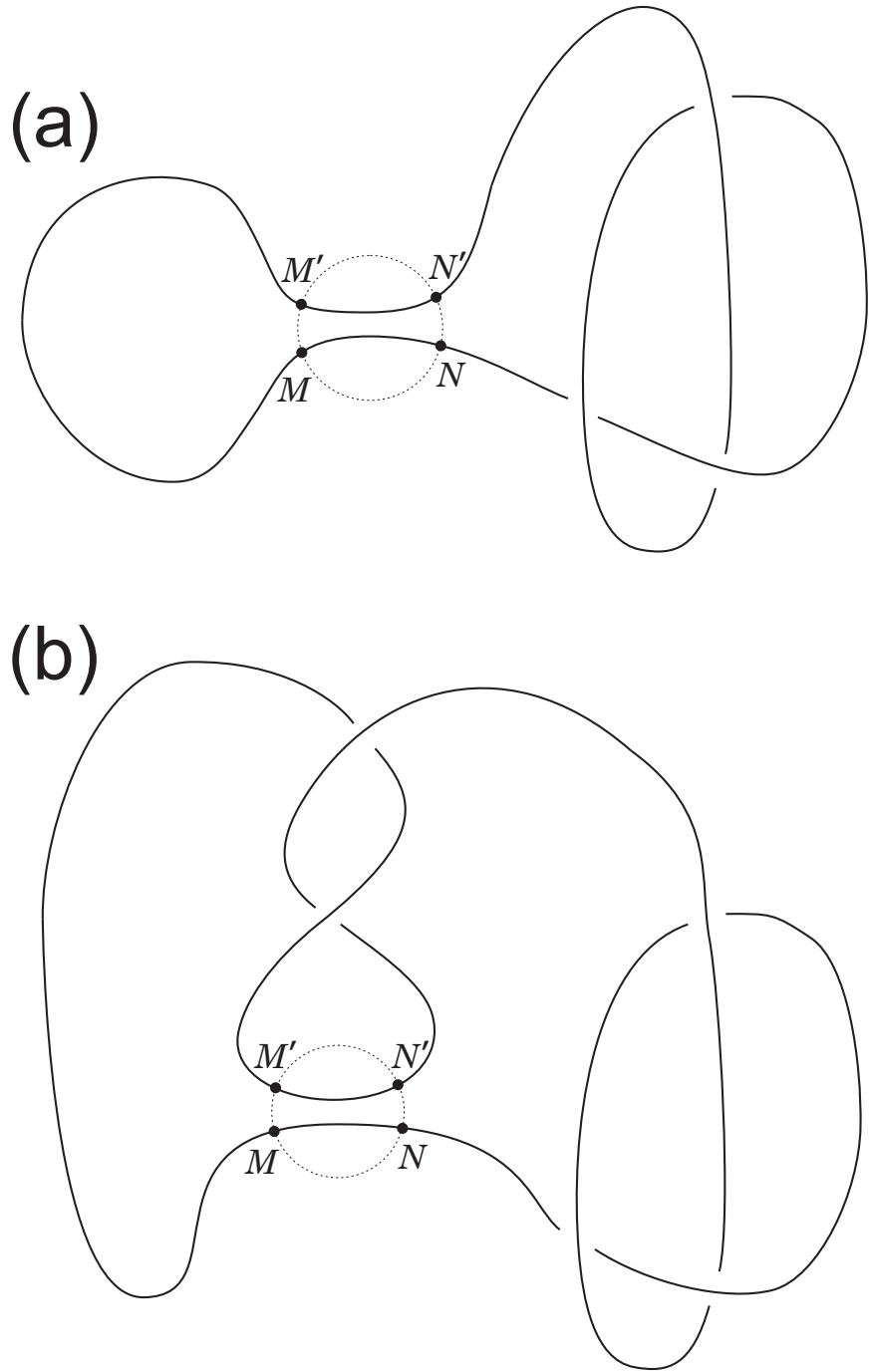}
 \caption{The links of the surfaces $X$ and $Y$ with an extra knot attached.}\label{fig:knot3}
 \end{figure}

\begin{figure}
 \centering
 \includegraphics[width=4.5in]{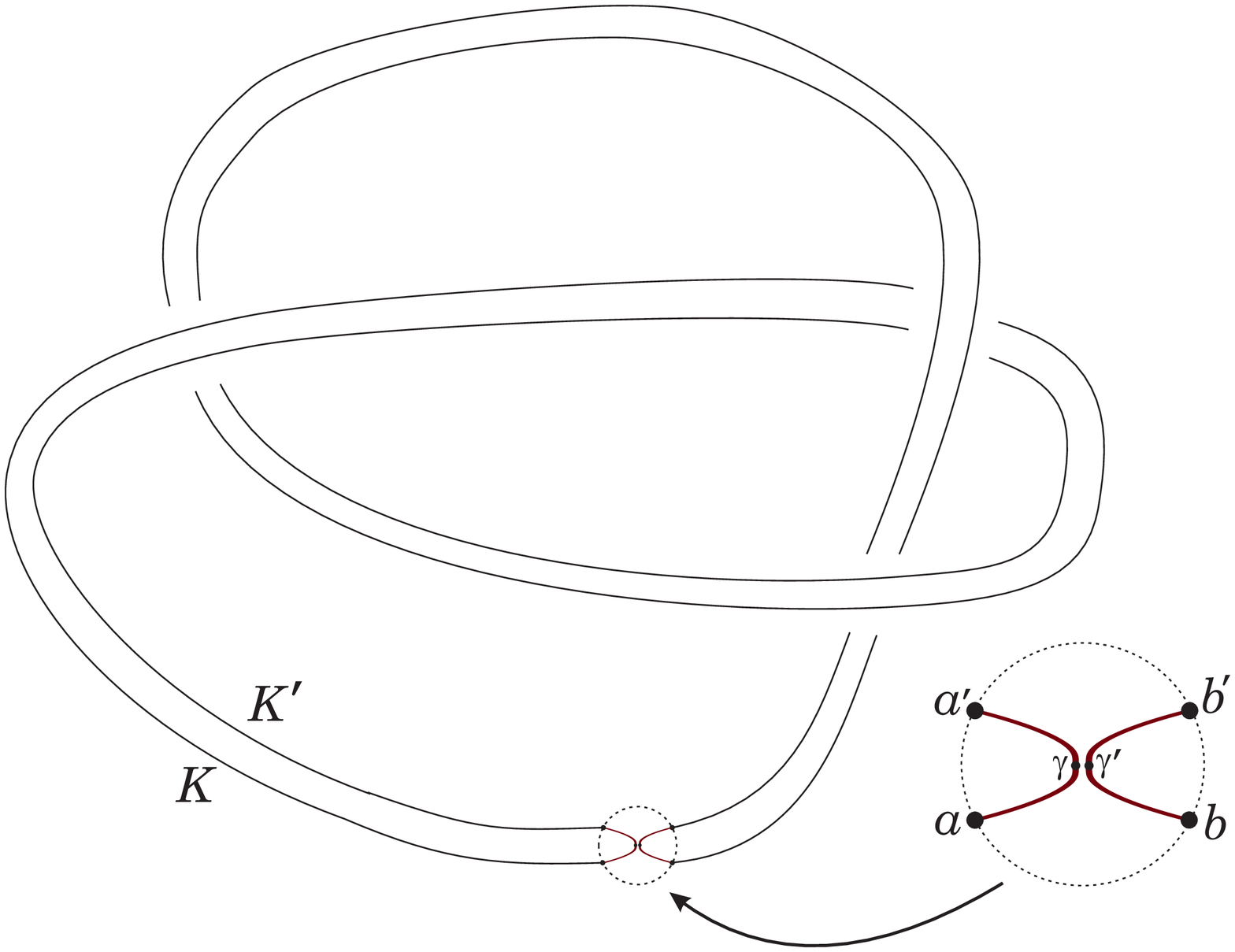}
 \caption{The link of the surface $X$ in Theorem \ref{universal}.}\label{doubleknot}
 \end{figure}

\begin{theorem}\label{thm:ex3} (see \cite{BirbrairGabrielov2019} Theorem 3.2.)
The germs of the surfaces $X$ and $Y$ at the origin are outer Lipschitz equivalent, topologically ambient equivalent, but not ambient Lipschitz equivalent.
\end{theorem}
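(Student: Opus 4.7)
The plan has three parts, corresponding to the three asserted properties; the first two are routine and the third is the main content.

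\emph{Topological ambient equivalence.} Both $X$ and $Y$ are topologically conical germs whose links are homeomorphic to the trivial knots $K$ and $L$, respectively. Since any two unknots in $S^3$ are ambient-isotopic, there is a homeomorphism between the links, and coning it over the origin produces a self-homeomorphism of a small ball in $\R^4$ carrying $X$ onto $Y$.

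\emph{Outer Lipschitz equivalence.} I would construct the bi-Lipschitz bijection $h\colon X\to Y$ piecewise: take the identity on $G$ and, on the straight cone over $K\setminus g$, the radial extension of a PL homeomorphism $\phi\colon K\setminus g\to L\setminus g$ of arcs with common endpoints. A PL homeomorphism between compact PL curves in $\R^3$ is automatically bi-Lipschitz in the outer metric, and radial extension $(tv)\mapsto t\phi(v)$ preserves that property. Because $\phi$ fixes the endpoints of $g$, the two definitions agree on the cones over those endpoints, so $h$ is a well-defined bi-Lipschitz map. Crucially, $h$ is not claimed to extend to the ambient $\R^4$, which is why the ambient question remains open after this step.

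\emph{Failure of ambient Lipschitz equivalence.} This is where the bridge construction is essential. Suppose for contradiction there were an ambient bi-Lipschitz germ $H\colon(\R^4,0)\to(\R^4,0)$ with $H(X)=Y$. Because $A_{q,\beta}\subset G\subset X$, the bridge theorem recalled from \cite{BirbrairGabrielov2019} forces $H$ to send $A_{q,\beta}$ to a $(q,\beta)$-bridge in $Y$ and guarantees that replacing both bridges by broken bridges $B_{q,\beta}$ yields surfaces $X'$ and $Y'$ that are still ambient Lipschitz equivalent, hence in particular topologically ambient equivalent. Consequently the links of $X'$ and $Y'$ would be topologically equivalent as knots/links in $S^3$. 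But breaking a $(q,\beta)$-bridge realises a saddle move on the link near the bridge region, and applied to the two diagrams of Fig.~\ref{fig:knot2} it produces the two links of Fig.~\ref{fig:knot2b}. The decisive step, and the only substantive topological input, is to verify that these two post-saddle links are not topologically equivalent; this will be the main obstacle, but it can be settled either by direct inspection of the diagrams or by computing a classical invariant such as the number of components, pairwise linking numbers, or the Alexander polynomials of the individual components. The resulting contradiction proves that $X$ and $Y$ are not ambient Lipschitz equivalent.
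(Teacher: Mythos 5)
Your proposal is correct and follows essentially the same route as the paper: the non-equivalence is established exactly by the bridge-breaking argument, with the resulting surfaces $X'$ and $Y'$ distinguished because the link of $X'$ is a two-component unlink while the link of $Y'$ has two linked circles (so the linking number you mention settles the ``decisive step''). The first two parts, which the paper leaves to \cite{BirbrairGabrielov2019}, are handled in the standard way you describe.
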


This is proved by replacing the $(q,\beta)$-bridges in $X$ and $Y$ by the broken $(q,\beta)$-bridges, resulting in
the new surfaces $X'$ and $Y'$, shown in Fig.~\ref{fig:knot2b}. The link of $X'$ consists of two unlinked circles while the two circles in the link of $Y'$ are linked. Thus $X'$ and $Y'$ are not topologically ambient equivalent, which
implies that $X$ and $Y$ are not ambient Lipschitz equivalent.

\begin{remark}
Notice that the tangent cones of both surfaces $X$ and $Y$ in Example 3 are topologically ambient equivalent to a cone over two unknotted circles, pinched at one point.
Thus Sampaio's theorem does not apply, and we need the bridge construction in this example.
Notice also that the bridge construction employed in this example allows one to construct examples
 of outer Lipschitz equivalent, topologically ambient equivalent
but ambient Lipschitz non-equivalent surface singularities in $\mathbb{R}^4$ with the tangent cones as small as a single ray.
\end{remark}

The surfaces $X$ and $Y$ in Example 3 differ by a ``twist'' of the $(q,\beta)$-bridge, which can be extended to a homeomorphism of the ambient space, but not to a bi-Lipschitz homeomorphism.
One can iterate such a twist to obtain infinitely many ambient Lipschitz non-equivalent surfaces.
 On can also attach an additional knot to the links of both surfaces $X$ and $Y$ (see Fig.~\ref{fig:knot3}).
 This yields the following ``universality'' result (see \cite{BirbrairGabrielov2019} Theorem 4.1).

\begin{theorem}\label{thm:main} For any semialgebraic surface germ $S \subset \R^4$ there exist infinitely many semialgebraic surface germs $X_i \subset \R^4$ such that  \vspace{0.05in}

$1) \ $ For all  $i$, the germs $X_i$ are topologically ambient equivalent to $S$;

$2) \ $ All germs $X_i$ are outer Lipschitz equivalent;

$3) \ $ The tangent cones of all germs $X_i$ at the origin are topologically ambient equivalent;

$4) \ $ For $ i \neq j$ the germs $X_i$ and $X_j$ are not ambient Lipschitz equivalent.
\end{theorem}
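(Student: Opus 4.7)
The plan is to combine the local bridge--twist construction of Example~3 with the given surface $S$, and then iterate the twist to generate infinitely many pairwise inequivalent germs.

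First, I would identify a suitable ``docking region'' in $S$ where the bridge construction can be inserted without disturbing the global topology of $S$ or of its tangent cone. Pick a smooth arc on the link of $S$, disjoint from the singular points of the link; the straight cone over a small neighborhood of this arc is a smooth piece outer Lipschitz equivalent to a standard disk $D\subset\R^4$. Inside $D$, insert the configuration from Example~3, i.e. the surface $G$ carrying the $(q,\beta)$-bridge $A_{q,\beta}$, glued back to the rest of $S$ along the boundary arcs $M,N,M',N'$. This produces a modified surface $\widetilde S$ that is topologically ambient equivalent to $S$, has a tangent cone topologically equivalent to that of $S$ (since $A_{q,\beta}$ has tangent cone a single ray, contained in the ray already corresponding to the docking arc), and is outer Lipschitz equivalent to $S$ (the modification is localized in an outer isometric neighborhood of the bridge).

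Next, for each $i\ge 1$, construct $X_i$ by performing $i$ successive twists of the $(q,\beta)$-bridge inside $\widetilde S$, using the same twist that converts $X$ into $Y$ in Example~3 (cf.~Fig.~\ref{fig:knot3}). Each twist extends to an ambient semialgebraic homeomorphism of a small ball and preserves the outer Lipschitz class of the bridge, so conditions $(1)$, $(2)$, $(3)$ follow immediately: the $X_i$ are mutually topologically ambient equivalent to $\widetilde S$ (hence to $S$), mutually outer Lipschitz equivalent, and share the same tangent cone. The construction is a direct iterated analogue of the passage from $X$ to $Y$ in Example~3, with an optional extra ``companion'' knot attached as in Fig.~\ref{fig:knot3} to accommodate an arbitrary $S$.

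The main obstacle, and the only nontrivial step, is condition $(4)$: pairwise ambient Lipschitz inequivalence. For this I would apply the bridge-replacement principle recalled in Section~3: if $X_i$ and $X_j$ were ambient Lipschitz equivalent, then so would be the surfaces $X'_i$ and $X'_j$ obtained by replacing each $(q,\beta)$-bridge by the broken bridge $B_{q,\beta}$. In $X'_i$, the effect of the $i$-fold twist is detected as a topological invariant of the link: each twist adds one full crossing between the two strands created by the saddle operation, so the linking number of the two distinguished circles in the link of $X'_i$ equals (up to an additive constant) $i$. Different values of $i$ therefore yield topologically ambient non-equivalent links, hence topologically non-equivalent surface germs, contradicting ambient Lipschitz equivalence. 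This is exactly the mechanism that separated $X'$ from $Y'$ in Example~3, now upgraded from ``two classes'' to ``countably many'' by iteration.

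The hard part is to make the twist precise as a semialgebraic operation with the three required properties: (a) it extends to a semialgebraic bi-Lipschitz homeomorphism of the ambient space of the bridge (so the outer Lipschitz class is preserved), (b) after the saddle operation it actually changes the linking class, and (c) its iterates produce strictly unbounded linking number rather than cancelling. The explicit model is a rotation in the $(y,z)$-plane parametrized by $t$ inside a $t^q$-thin tube around the bridge, completing $i$ full turns as $t$ ranges over $[0,1]$; the boundary arcs $M,N,M',N'$ are fixed, so the twist glues consistently with the rest of $\widetilde S$, while the two arcs introduced by the saddle operation on $A_{q,\beta}$ thread through $i$ crossings. Once this local model is in place, the proof is completed by the bridge-replacement argument exactly as in Theorem~\ref{thm:ex3}.
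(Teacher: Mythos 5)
Your overall strategy---embed the $(q,\beta)$-bridge of Example~3 into a smooth conical piece of $S$, produce $X_i$ by iterating the twist, and detect $i$ after replacing the bridge by the broken bridge via the linking number of the two resulting circles---is exactly the route the paper indicates. The paper itself gives only a sketch (``One can iterate such a twist\dots\ One can also attach an additional knot,'' deferring to Theorem~4.1 of \cite{BirbrairGabrielov2019}), and your proposal is a reasonable expansion of that sketch, with the correct mechanism for condition~(4): ambient Lipschitz equivalence would survive the bridge-breaking operation, but the broken links are distinguished topologically by the number of twists.

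There is, however, one genuine logical defect: your property~(a), the assertion that the twist ``extends to a semialgebraic bi-Lipschitz homeomorphism of the ambient space of the bridge.'' If the twist extended to an ambient bi-Lipschitz homeomorphism supported near the bridge and fixing the boundary arcs $M,N,M',N'$, it would glue with the identity to an ambient bi-Lipschitz homeomorphism of $(\R^4,0)$ carrying $X_i$ to $X_{i+1}$; then all the $X_i$ would be ambient Lipschitz equivalent and condition~(4)---the whole content of the theorem---would be false. The paper is explicit that the opposite holds: the twist ``can be extended to a homeomorphism of the ambient space, but not to a bi-Lipschitz homeomorphism.'' What condition~(2) actually requires is weaker and of a different nature: the twisted and untwisted germs must be outer Lipschitz equivalent, i.e., there must be a bi-Lipschitz map between the two \emph{surfaces} in the outer metric (matching $G$ with $G$ and the straight cones over the complementary arcs with each other, preserving tangency orders), with no claim that this map is induced by an ambient one. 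Your explicit rotation model should be re-examined in this light: the strands that must be dragged around the bridge are not confined to a $t^q$-thin tube, and any ambient realization of the twist necessarily distorts distances unboundedly near the origin---this failure is precisely what leaves room for condition~(4). Once (a) is replaced by ``the twist preserves the outer Lipschitz type of the surface germ but admits only a topological, not bi-Lipschitz, ambient extension,'' your argument agrees with the paper's.
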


Other versions of universality can be formulated. In particular, the following theorem is proved in \cite{BirbrairBrandenbursky2020}.

\begin{theorem}\label{universal} \emph{(Universality Theorem)}
For each knot $K\subset S^3$, there exists a germ at the origin of a semialgebraic surface $X_K\subset\R^4$
such that \vspace{0.05in}

$1) \ $ The germs $X_K$ are outer Lipschitz equivalent for all knots $K$.

$2) \ $ The links of all germs $X_K$ are topologically trivial knots in $S^3\subset\R^4$.

$3) \ $ The germs $X_{K_1}$ and $X_{K_2}$ are ambient Lipschitz equivalent only if the knots $K_1$ and $K_2$ are topologically ambient equivalent.
\end{theorem}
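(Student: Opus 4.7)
The plan is to refine the bridge construction from Example 3 so that the ambient topological information hidden inside a single $(q,\beta)$-bridge becomes a complete topological invariant of the knot $K$. The key tool is the bridge invariance recalled in Section 3: ambient Lipschitz equivalence $h\colon X\to Y$ maps a $(q,\beta)$-bridge in $X$ to a $(q,\beta)$-bridge in $Y$, and the two surfaces remain ambient Lipschitz equivalent after breaking. In particular, if $X_{K_1}$ and $X_{K_2}$ are ambient Lipschitz equivalent, then the broken-bridge versions $X'_{K_1}$ and $X'_{K_2}$ are also ambient Lipschitz equivalent, hence topologically ambient equivalent, so their links are topologically ambient equivalent as one-dimensional subsets of $S^3$.

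Construction of $X_K$. I would take $X_K$ to consist of a truncated straight cone over a circle $U\subset S^3$, together with a semialgebraic $(q,\beta)$-bridge $A_{q,\beta}$ glued along four fixed attaching arcs $m,n,m',n'$. The embedding of the cone and of the bridge into $\R^4$ is tailored to $K$ so that (i) the link of $X_K$ in $S^3$ is an unknot, which gives (2), and (ii) the link of the broken surface $X'_K$ is a two-component link in $S^3$ whose ambient topological type determines $K$ uniquely. The idea, suggested by Figure \ref{doubleknot}, is to run two parallel strands in the pattern of $K$ and to join them by a band whose saddle move unknots the pair. Such a band exists because the knot $K\,\#\,(-K)$ is a ribbon knot, and its ribbon disk provides a band presentation exhibiting the unknot as a band sum of two parallel copies of $K$; the bridge $A_{q,\beta}$ sits along this band, and breaking it reverses the band move, recovering the two parallel copies of $K$ as the link of $X'_K$.

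Verification of the three properties. Property (2) is built into (i). For property (1), every $X_K$ is intrinsically a germ obtained by pasting a fixed truncated cone over $S^1$ to a fixed $(q,\beta)$-bridge along fixed attaching arcs, the only $K$-dependence being in the ambient embedding, which is invisible to the outer metric; piecewise bi-Lipschitz bijections $X_{K_1}\to X_{K_2}$ can therefore be assembled directly from matching the pieces. For property (3), the bridge invariance converts an ambient Lipschitz equivalence $X_{K_1}\to X_{K_2}$ into an ambient topological equivalence $X'_{K_1}\to X'_{K_2}$, and by our choice of construction the link of $X'_{K_i}$ determines $K_i$ up to topological ambient equivalence of knots in $S^3$.

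Main obstacle. The technical heart of the argument is step (ii): exhibiting a semialgebraic bridge, attached inside a spanning disk of an unknot, whose broken link is a \emph{complete} topological invariant of $K$ rather than a coarser one such as a linking number or Alexander polynomial. The ribbon presentation of $K\,\#\,(-K)$ supplies the topological template, but converting it into a semialgebraic realization compatible with the $(q,\beta)$-bridge parameters, and verifying that the four attaching arcs $m,n,m',n'$ can be placed so that the link before breaking is genuinely an unknot in $S^3$, is where the main topological and geometric work lies.
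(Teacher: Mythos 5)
Your construction of $X_K$ is essentially the paper's: two parallel copies $K\cup K'$ bounding an embedded annulus $L$, joined near one point so that the link of $X_K$ becomes the band sum of $K$ and $K'$ along a band in $L$, hence an unknot (the paper phrases this as ``contractible to a point inside $L$''; your detour through ribbon presentations of $K\#(-K)$ is unnecessary, since the cut-open annulus is already an embedded spanning disk). Where you genuinely diverge is in the proof of part (3). You place a $(q,\beta)$-bridge at the joining band and extract the knot type by breaking it and reading off the two-component link $K\sqcup K'$ from the link of $X'_K$, via the bridge-invariance statement of Section 3. The paper instead makes the two arcs $\gamma,\gamma'$ at the joining region tangent of order $\beta>1$, so that the tangent cone of $X_K$ is the cone over two copies of $K$ pinched at a point, and then applies Sampaio's theorem: ambient Lipschitz equivalence of the germs forces topological ambient equivalence of the tangent cones, hence of the knots. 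The Sampaio route is shorter and needs no bridge machinery at all (the paper's proof of Theorem \ref{universal} never breaks a bridge); your route is heavier but, as the remark after Theorem \ref{thm:ex3} indicates, it would survive even if one insisted on tangent cones as small as a single ray, where Sampaio's theorem gives nothing. So your argument is a legitimate alternative, not a reproduction.

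Two points in your write-up need more care. First, the invariance lemma only says that $h$ maps your bridge to \emph{some} $(q,\beta)$-bridge of $X_{K_2}$; to conclude that breaking the two distinguished bridges yields ambient Lipschitz equivalent surfaces you must argue, as in \cite{BirbrairGabrielov2019}, that the bridge locus is pinned down by the outer geometry --- which it is here, being the only place where two H\"older triangles of the surface approach each other with tangency order $q>1$. Second, for part (1) the phrase ``the ambient embedding is invisible to the outer metric'' is false in general (the outer metric is precisely the ambient metric restricted to the set); the correct justification is that a straight cone over a smoothly embedded circle is normally embedded, so its outer Lipschitz type does not depend on the knotting, while the bridge piece has a fixed outer type by construction.
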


\begin{proof}
Idea of the proof is illustrated in Fig.~\ref{doubleknot}. The knot $K$ is realized as a smooth semialgebraic circle embedded in $S^3$.
Let $L$ be a semialgebraic strip homeomorphic to $K\times[0,\epsilon]$ embedded in $S^3$ so that $K$ is one of the two boundary curves of $L$.
Let $K'$ be the other boundary curve of $L$, so that the knots $K$ and $K'$ are isotopic.
Let $C\subset\R^4$ be a cone over $K\cup K'$ with the vertex at the origin.
Let $y$ be a point such that the intersection of a small ball $B\subset S^3$ centered at $y$ with $K\cup K'$
consists of two segments, $[a,b]\subset K$ and $[a',b']\subset K'$.
We can remove from $C$ the cone $C_B$ over $B\cap(K\cup K')$ and replace it by a semialgebraic surface $G$ inside the cone over $B$, with the link shown in the zoomed part of Fig.~\ref{doubleknot}, such that\newline
a) the link of $G$ consists of two segments $[a,a']$ and $[b,b']$,\newline
b) the boundary of $G$ is the same as the boundary of $C_B$,\newline
c) two arcs $\gamma$ and $\gamma'$ in $G$ have the tangency order $\beta>1$.\newline
This results in the surface $X_K$ with the link shown in Fig.~\ref{doubleknot}.
The link of $X_K$ is a trivial knot (it is contractible to a point inside $L$).
The link of the tangent cone of the surface $X_K$ consists of two knots, each of them isotopic to $K$, pinched at a point corresponding
to the common tangent line to $\gamma$ and $\gamma'$.
Thus the tangent cones of the surfaces $X_K$ are not topologically ambient equivalent for the knots which are not topologically ambient equivalent.
It follows from Sampaio's theorem that the same is true for the surfaces $X_K$ themselves.
\end{proof}

\begin{remark}\label{rmk:universal} \emph{Theorem \ref{universal} is called ``Universality Theorem'' because it implies that
the ambient Lipschitz classification problem for the surface germs in $\R^4$
in a single outer Lipschitz equivalence class,
with the topologically ambient trivial links, contains all of the Knot Theory.}
\end{remark}

\end{document}